\numberwithin{equation}{section}
\newtheorem{thm}{Theorem}[section]
\newtheorem{lem}{Lemma}[section]
\newtheorem{prop}{Proposition}[section]
\newtheorem{cor}{Corollary}[section]
\begin{document}
\title[regularity estimates for quasilinear equations]{Optimal $C^{1,\alpha}$ estimates for a class of elliptic quasilinear equations} \subjclass{35J62,35J70}
\keywords{$p$-Laplacian, optimal regularity, quasilinear equations, interior estimate}
\author{Dami\~ao J. Ara\'ujo}
\address{Universidade Federal da Para\'iba, Department of Mathematics, Jo\~ao Pessoa 58.051-900, Brazil} \email{araujo@mat.ufpb.br}

\author{Lei Zhang}
\address{Department of Mathematics\\
        University of Florida\\
        358 Little Hall P.O.Box 118105\\
        Gainesville FL 32611-8105}
\email{leizhang@ufl.edu}

\date{\today}

\begin{abstract} In this article we establish sharp $C^{1,\alpha}$ estimates for weak solutions of singular and degenerate quasilinear elliptic equation 
$$-\,div\, a(x, \nabla u) = f,$$
 which includes the standard $p$-laplacean equation with varying coefficients as a special case. The sharp exponent $\alpha$ is asymptotically optimal and is determined by the H\"older regularity of the coefficients, the exponent $p$ and the $q$-integrability of the source term $f$.
\end{abstract}

\maketitle

\section{Introduction}

The main goal of this article is to investigate sharp local regularity for solutions of singular and degenerate elliptic equations with varying coefficients
\begin{equation}\label{prop10}
- \,div\, a(x, \nabla u) = f(x) \quad \mbox{ in }\quad \Omega,
\end{equation}
for a bounded domain $\Omega \subset \mathbb R^n$ and dimension $n\geq 2$. The vector field $a=a(x,\xi): \Omega \times \mathbb{R}^n \to \mathbb{R}^n$ is $C^1$-regular in the gradient variable $\xi$, and satisfies the following structural assumptions: for each $x,x_i\in \Omega$ and $\xi,\xi_i\in \mathbb R^n$, $i=1,2$, there holds
\begin{equation}\label{a-cond}
\left\{
\begin{array}{c}
|a(x,\xi)|+|\partial_{\xi} a(x,\xi)||\xi| \leq \Lambda|\xi|^{p-1} \\[0.1cm]
\lambda |\xi_1|^{p-2}|\xi_2|^{2} \leq\; \big \langle\partial_{\xi} a(x,\xi_1)\xi_2, \xi_2 \big \rangle \\[0.1cm]
|a(x_1,\xi)-a(x_2,\xi)| \leq \omega(|x_1-x_2|)|\xi|^{p-1},
\end{array}
\right.
\end{equation}
for $2-1/n<p$, positive constants $\lambda \leq \Lambda$ and $\omega: [0,\infty) \to [0,\infty)$ a nondecreasing function satisfying $\omega(0)=0$. Hereafter,  we
assume
\begin{equation}\label{rhs}
\omega \in C^{0,\sigma}(\Omega) \quad \mbox{and} \quad f \in L^q(\Omega)
\end{equation}
for some $0<\sigma<1$ and $n<q\le \infty$. One special model of (\ref{prop10}) is the nonhomogeneous $p$-laplacean equation with coefficients
 \begin{equation}\label{protot}
-\, div\, (\gamma(x)|\nabla u|^{p-2}\nabla u)=f(x),
\end{equation}
where $0 < \lambda \leq \gamma(\cdot) \leq \Lambda$ is a H\"older-continuous function.


The investigation of gradient regularity properties for solutions of equation (\ref{prop10}) has been a central subject of research since the fundamental work of Uraltseva \cite{U}, who established the $C^{1,\mu}$ estimates for solutions of the $p-$harmonic equation
\begin{equation}\nonumber
-\Delta_p u:=-div\,(|\nabla u|^{p-2}\nabla u)=0, \quad p>1.
\end{equation}
See also \cite{tolks,evans} and the reference therein. Since then, regularity estimates for equations with varying coefficients as in \eqref{prop10} have been established by DiBenedetto \cite{DB} and Tolksdorf \cite{tolks}.
Here we mention two papers closely related to our article. In \cite{DM,KM} Duzaar, Kussi and Mingione established modulus of continuity for $\nabla u$, if
$$a(\cdot, \xi) \quad \mbox{ is Dini-continuous and  } \quad  f \in L(n,1/(p-1)),  $$
 where $p>2-1/n$, $L(n,1/(p-1)) \supset L^n$ is a Lorentz space. These assumptions are essentially optimal for $C^1$-regularity of solutions. In particular if \eqref{rhs} holds, a modulus of continuity of $\nabla u$ is obtained. It was discovered by Teixeira \cite{T1} that, under low regularity assumptions on the coefficients of equation (\ref{prop10}), solutions are still surprisingly smooth around the critical points of $u$. This is unexpected because at these points the equation is not uniformly elliptic: the coefficients tend to either infinity ($1<p<2$) or zero ($p>2$).


Based on regularity of the coefficients of \eqref{prop10} and the integrability of the source $f$ described in \eqref{rhs}, our main purpose is to establish the sharp index for the modulus of continuity of $\nabla u$.   Before stating our main results we list a few well known examples of (\ref{prop10}), which indicate the best regularity exponents in ideal situations. First of all,  direct computation shows that
$$
v(x)=C_p|x|^{\frac{p}{p-1}} \quad  \mbox{solves} \quad \Delta_p v=n,
$$
for $C_p=\frac{p-1}p$. This example tells that, in the case $\gamma\equiv 1$ and $q=\infty$, if the optimal regularity for (\ref{prop10}) is $C^{1,\alpha}$, $\alpha$ cannot be greater than $\frac{1}{p-1}$. The second example
$$
\overline{v}(x)=|x|^{1+\nu} \quad \mbox{solves} \quad \Delta_p\overline v=(1+\nu)^{p-1}\nu(p-1)|x|^{\nu (p-1)-1}.
$$
Clearly $\Delta_p\overline{v}\in L^q(B_1)$ for some $q>n$ and $\overline{v}\in C^{1+\nu}$ whenever $\nu=\frac{1-n/q}{p-1}+\epsilon$ for each $\epsilon>0$. Thus if $f$ is only assumed to be in $L^q$ for some $q>n$, one cannot expect $\alpha$ to be greater than $\frac{1-n/q}{p-1}$. The third reasonable obstruction comes from  the $\sigma$-H\"older continuity of $\omega$, which implies $\alpha\le \sigma$, because even for the linear homogeneous elliptic case
$$
-div(a_{ij}(x)\nabla u)=0,
$$
is necessary to assume $a_{ij}\in C^{0,\sigma}$ in order to obtain local $C^{1,\sigma}$ estimates.

Based on these obstructions we define the following quantity:
\begin{equation}\label{scaling}
\alpha_{\sigma,p,q}:= \left\{\begin{array}{rl}
\displaystyle\min\left\{\sigma,1-\frac{n}{q}\right\}\cdot \min\left\{1,\frac{1}{p-1}\right\} \, & \mbox{ if} \quad n<q<\infty,\\[0.3cm]
\sigma \cdot \displaystyle\min\left\{1,\frac{1}{p-1}\right\} \, & \mbox{ if} \quad q=\infty.\\
\end{array}\right.
\end{equation}

For the constant coefficient field $a(x,\xi)=a(\xi)$, case $\omega\equiv 0$, it is well known that weak solutions of
\begin{equation}\label{consteq}
-div\,(a(\nabla u))=0
\end{equation}
are locally $C^{1,\alpha_m}$ for a maximal $\alpha_m\in (0,1)$ depending only on $n, p ,\lambda$ and $\Lambda$. By the aforementioned examples we cannot expect the optimal exponent $\alpha$ greater than $\min\{\alpha_m,\alpha_{\sigma,p,q}\}$. Our main result provides the following asymptotic optimal regularity estimate.

\begin{thm}\label{thm1} Let $u \in W^{1,p}(\Omega)$ be a weak solution of \eqref{prop10} under the assumptions \eqref{a-cond} and \eqref{rhs}. Then $u$ is locally $C^{1,\alpha}(\Omega)$, where
\begin{equation}\label{opt-gamma}
\alpha = \min\{\alpha_m^-,\alpha_{\sigma, p,q}\}.
\end{equation}
Moreover, for any $K\Subset \Omega$,
there holds
\begin{equation}\label{mainest}
\sup_{x,y \, \in \,K, \,x\neq y}\frac{|\nabla u(x)-\nabla u(y)|}{|x-y|^{\alpha}}\le C,
\end{equation}
for some $C>0$ depending only on $\alpha, n,p,\Lambda,\lambda,\|u\|_{W^{1,p}},\|\omega\|_{C^{0,\sigma}},\|f\|_{L^q}$ and $dist(K,\partial\Omega)$.
\end{thm}

Hereafter in this paper we denote $\alpha$ as in \eqref{opt-gamma} with $\alpha_m^-$ denoting any positive number less than $\alpha_m$. So in this case, the constant $C=C(\alpha)$ may blow-up as $\alpha\to \alpha_m$.


In particular, Theorem \ref{thm1} provides an important information on how the $C^{1,\alpha}$-regularity deteriorates as q approaches $n$. It is well known, see \cite{M1,M}, that a solution $u$ of (\ref{protot}) satisfies $\nabla u\in C^{0,\alpha(\varepsilon)}$ provided $f\in L^{n+\varepsilon}$ and $\gamma\in C^{0,\varepsilon}$,
 where $\alpha(\varepsilon)\to 0$ as $\varepsilon\to 0$. However, no explicit expression of $\alpha(\varepsilon)$ is given. Using Theorem \ref{thm1} we can make $\alpha(\varepsilon)$ explicit.

\begin{cor}\label{cor1}
Let $u$ be a weak solution to \eqref{prop10} satisfying \eqref{a-cond} and \eqref{rhs} for $p \geq 2$. There exists a small number $\overline{\varepsilon}$, depending on parameters as in Theorem \ref{thm1} such that if
$$
\omega \in C^{\,0,\varepsilon} \quad \mbox{and} \quad f \in L^{n+\varepsilon}
$$
for $0<\varepsilon \leq \overline{\varepsilon}$, then $\nabla u$ is locally of class $C^{0,\alpha(\varepsilon)}$ for
$$
\alpha(\varepsilon)=\frac{\varepsilon}{n+\varepsilon}\cdot \frac{1}{p-1}.
$$
\end{cor}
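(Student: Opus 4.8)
The plan is to deduce Corollary \ref{cor1} directly from Theorem \ref{thm1} by choosing the parameters appropriately and simplifying the expression for $\gamma$. Set $q=n+\varepsilon$ and recall that under the hypothesis $p\ge 2$ one has $\min\{1,1/(p-1)\}=1/(p-1)$. Since $\omega\in C^{0,\varepsilon}$ we may take $\sigma_0=\varepsilon$ (or any value with $\varepsilon\le\sigma_0<1$); the point is that for small $\varepsilon$ the minimum in the definition of $\alpha_{p,q}$ is governed by the integrability term. Indeed,
\begin{equation}\nonumber
1-\frac{n}{q}=1-\frac{n}{n+\varepsilon}=\frac{\varepsilon}{n+\varepsilon},
\end{equation}
and for $\varepsilon$ small this quantity is strictly smaller than $\sigma_0=\varepsilon$ (as $n\ge 2$, one has $\varepsilon/(n+\varepsilon)<\varepsilon$). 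Hence $\min\{\sigma_0,1-n/q\}=\varepsilon/(n+\varepsilon)$ and
\begin{equation}\nonumber
\alpha_{p,q}=\frac{\varepsilon}{n+\varepsilon}\cdot\frac{1}{p-1}=\alpha(\varepsilon).
\end{equation}

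Next I would observe that $\alpha(\varepsilon)\to 0$ as $\varepsilon\to 0$, while $\alpha_M>0$ depends only on $n,p,\lambda,\Lambda$ and is therefore bounded away from zero independently of $\varepsilon$. Consequently, for all sufficiently small $\varepsilon$ we have $\alpha(\varepsilon)<\alpha_M$, so that $\alpha(\varepsilon)<\alpha_M^{-}$ in the sense used in the statement of Theorem \ref{thm1}, and therefore
\begin{equation}\nonumber
\gamma=\min\{\alpha_M^{-},\alpha_{p,q}\}=\alpha_{p,q}=\alpha(\varepsilon).
\end{equation}
Applying Theorem \ref{thm1} with this value of $\gamma$ yields that $u$ is locally $C^{1,\alpha(\varepsilon)}(\Omega)$, which is exactly the assertion of the corollary.

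There is essentially no obstacle here: the corollary is a transparent specialization of the main theorem, and the only thing to check carefully is the arithmetic identifying $1-n/q$ with $\varepsilon/(n+\varepsilon)$ and confirming that this term wins the minimum against $\sigma_0$ for small $\varepsilon$. If one wants to be scrupulous, one should also note that the constant $C_\gamma$ in \eqref{mainest} degenerates like $(\alpha_M-\alpha(\varepsilon))^{-1}$, but since $\alpha(\varepsilon)\to 0$ this factor stays bounded as $\varepsilon\to 0$, so the estimate is perfectly stable in the borderline regime $q\to n$. This is precisely what makes the explicit formula for $\alpha(\varepsilon)$ meaningful and sharpens the qualitative result of Manfredi \cite{M1,M}.
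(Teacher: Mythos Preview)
Your proposal is correct and matches the paper's own treatment: the paper does not give a separate proof of the corollary but simply declares it ``an immediate consequence of Theorem \ref{thm1}'', which is exactly the specialization you carry out. Your arithmetic identifying $1-n/q=\varepsilon/(n+\varepsilon)$, the verification that this term dominates the minimum against $\sigma_0=\varepsilon$, and the observation that $\alpha(\varepsilon)<\alpha_M$ for small $\varepsilon$ are all correct and are precisely the checks implicit in the paper's claim.
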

\medskip

The proof of Corollary \ref{cor1} follows by choosing $\varepsilon$ sufficiently small such that $\varepsilon/(n+\varepsilon) < \alpha_m$. Therefore, it easy to check that for $p \geq 2$ we have $\alpha_{\sigma,p,q}=\frac{\varepsilon}{n+\varepsilon}\cdot \frac{1}{p-1}$ where $\alpha_{\sigma,p,q}=\alpha$, for $\alpha$ as in Theorem \ref{thm1}.


In the plane, using tools from complex analysis in \cite{LL}, Lindgren and Lindqvist obtained the optimal interior $C^{1,\alpha}$ estimate for the inhomogeneous $p$-Laplacian equations,
$$
-\Delta_p u = f \in L^q, \quad
$$
where $p\ge 2$ and $q < \infty$. For the case $f \in L^\infty$, the first author, Teixeira and Urbano \cite{AT} proved that solutions are locally $C^{1,p'-1}$ where $p'$ is the conjugated index of $p$. Still in dimension $n=2$, case $f\equiv0$, $p$-harmonic functions are locally $C^{1,\alpha_p}$ for an exponent $\alpha_p$ satisfying the following strict inequality
\begin{equation}\nonumber
\frac{1}{p-1} < \alpha_p,
\end{equation}
for any $2 \leq p < \infty$,  see \cite{AT,BK}.   Consequently, we observe that the maximal regularity exponent for the constant coefficients equation \eqref{consteq} satisfies $\alpha_m > 1/(p-1)$ when $n=2$. Thus we can apply Theorem \ref{thm1} to obtain the optimal regularity for solutions of \eqref{prop10} in two dimension spaces:

\begin{cor}[Optimal regularity in the plane] Under conditions \eqref{a-cond} and \eqref{rhs} for $2<q\leq \infty$, $p \geq 2$ and $n=2$, solutions of equation \eqref{prop10} are locally $C^{1,\varrho}$ for the optimal exponent
\begin{equation}\nonumber
\varrho=
\left\{
\begin{array}{ccc}
\dfrac{1}{p-1} \cdot
\displaystyle\min\left\{\sigma,\frac{q-2}{q}\right\} & \mbox{if}& 2 < q < \infty \\
\dfrac{1}{p-1} \cdot
\sigma & \mbox{if}& q = \infty \\

\end{array}
\right.
\end{equation}
\end{cor}


Here we mention the essential ingredients in the proof of the main results. For a generic solution of \eqref{prop10} we consider an appropriate neighbourhood of the critical set $\mathcal{C}(u):=\{x : \nabla u (x) = 0\}$ in small balls with large radii: $|\nabla u| \lesssim r$. In this case we apply the oscillation estimates developed by the first author, Teixeira and Urbano, see \cite{AT,ATU}. However, for balls of small radii: $r \lesssim |\nabla u|$ the gradient becomes large and the vector field $a(x,\xi)$ has a linear growth at infinity. In this case, Theorem \ref{thm_2} (to be established in the Section \ref{ellipsec}) essentially provides the optimal regularity estimates for such regions. By combining estimates in these situations carefully we obtain the desired local estimates.

\subsection*{Notations}\label{not} We use $B_r(x) \subset \mathbb{R}^n$ to denote the open ball with radius $r>0$ centered at $x \in \mathbb{R}^n$. If $x$ is the origin we use $B_r$ instead of $B_r(0)$.  Given a compact set $K \Subset \Omega$, we denote $\mbox{dist}(K,\partial\Omega)$ the euclidean distance between $K$ and the boundary of the domain $\Omega$.

\subsection*{Organization of the paper}\label{org}
In Section \ref{sec2}, we derive regularity estimates for balls with large radii. In Section \ref{sec3},  using optimal regularity estimates for equations with linear growth, Theorem \ref{thm_2}, we establish estimates for balls with small radii, and in Section \ref{mainthmsec}, standard arguments are employed to prove Theorem \ref{thm1}.
For the sake of clarity and completeness in the presentation of the arguments, we leave the proof of Theorem \ref{thm_2} in the Appendix, Section \ref{ellipsec}.

\section{Sharp growth estimates for large radii}\label{sec2}

First we recall that equation \eqref{prop10} satisfies \eqref{a-cond} and \eqref{rhs}. By the aforementioned works of Duzaar-Mingione \cite[theorem 4]{DM} and Kuusi-Mingione \cite[theorem 1.6]{KM}, all solutions $u$ of (\ref{prop10}) are at least $C^1$. Thus $\nabla u$ is defined at each point.

The goal in this section is to derive the following estimates for weak solutions $u$ of \eqref{prop10}:
\begin{equation}\label{estfinalsmall}
\sup\limits_{B_\rho (x_0)}|u(x)-u(x_0)-\nabla u (x_0)\cdot (x-x_0)| \le C\rho^{1+\alpha}
\end{equation}
for radii satisfying
$$
|\nabla u(x_0)| \le c\rho^{\alpha}
$$
with $c,C$ depending on $n,p,\Lambda,\lambda, \alpha,\|u\|_{W^{1,p}}, \|\omega\|_{C^{0,\sigma}}, \|f\|_{L^q}$ and $dist(K,\partial \Omega)$. Hereafter in this paper, constants with this dependence shall be called universal. Here is the main result of this section.

\begin{prop} \label{small}
Let $u$ be a weak solution of \eqref{prop10} in $\Omega$. Given a compact $K \Subset \Omega$ and $x_0 \in K$, there exist universal positive constants $\kappa,C$ and $\overline{\rho}$, such that if
\begin{equation}\label{E7}
|\nabla u(x_0)| \leq \kappa \rho^{\,\alpha},
\end{equation}
for some $0<\rho\leq\overline{\rho}$, then
\begin{equation}\label{smallestim}
\sup\limits_{B_{\rho}(x_0)}|u(x)-u(x_0)| \leq  C\rho^{1+\alpha}.
\end{equation}
\end{prop}

In order to derive the Proposition \ref{small} we show, under a certain smallness assumption of certain parameters, $u$ can be approximated by a solution of a constant coefficient equation. For the sake of clarity, we restrict our analysis to the simplest case $\Omega=B_1$ and $x_0=0$.

\begin{lem}\label{propcomp}
Given $\epsilon>0$ there exists $\delta>0$ depending on $n$,$p$,$q$,$\Lambda$,$\lambda$,$\sigma$ and $\epsilon$, such that if 
\begin{equation}\label{jampa}
\|f\|_{L^q(B_1)} \leq \delta,
\quad \sup\limits_{B_1} |a(x,\xi)-a(0,\xi)| \leq \delta\, |\xi|^{p-1}
\end{equation}
and \eqref{a-cond} holds, then for each weak solution $u$ of \eqref{prop10} in $B_1$ satisfying $\|u\|_{L^\infty(B_1)} \leq 1$ and $u(0)=0$, there exists a function $h$ in $B_{3/4}$, weak solution of
\begin{equation}\label{homeq}
-\mbox{div}\,(\overline{a}(\nabla h))=0 \quad \mbox{in} \quad B_{3/4} \quad \mbox{with} \quad h(0)=0,
\end{equation}
for a constant coefficients field $\overline{a}$, satisfying \eqref{a-cond} with $\omega \equiv 0$, such that
\begin{equation}\label{comp-estimate}
\sup\limits_{B_{1/2}}|u-h| + |\nabla u(0)-\nabla h (0)| \leq \epsilon.
\end{equation}
\end{lem}

\begin{proof}
By way of contradiction we assume that there exist $\epsilon_\star>0$ for which the Lemma fails. This means that we can find sequences $\{u_j\}$, $\{a_j\}$, $\{f_j\}$ and $\{\delta_j\}$, for $j\in \mathbb N$ satisfying
\begin{equation}\label{E1}
\mbox{div} (a_j(x,\nabla u_j)) = f_j \quad \mbox{in} \quad B_1
\end{equation}
where $\|u_j\|_{L^\infty(B_1)} \leq 1$ and $u_j(0)=0$ as well as
\begin{equation}\label{E2}
\|f_j\|_{L^q(B_1)} \leq \delta_j \;\; \mbox{and} \;\;  |a_j(x,\xi)-a_j(0,\xi)|\cdot |\xi|^{1-p} \leq \delta_j, \;\; \mbox{for} \;\; \delta_j \to 0
\end{equation}
as $j \to \infty$. However, for any solution $h$ of a homogeneous constant coefficients equation, as in \eqref{homeq}, satisfying $h(0)=0$, there holds
\begin{equation}\label{E3}
\sup_{B_{1/2}}|u_j-h| + |\nabla h(0)-\nabla u_j (0)| > \epsilon_\star
\end{equation}
for some positive parameter $\epsilon_\star$.

In view of this, a standard regularity result for solutions of (\ref{E1}) assures that $\{u_j\}$
is a pre-compact sequence in the $C^1$-topology, see \cite{DM,KM} and \cite[Theorem 2.1]{T1}. Therefore, along a subsequence, $\{u_j\}$ converges locally to a function $u_{\infty}$ in $C^1$ and the following estimate holds:
\begin{equation}\label{contradiction}
u_\infty(0)=0 \quad \mbox{and} \quad \sup\limits_{B_{1/2}}|u_j-u_\infty|+ |\nabla u_j(0)-\nabla u_\infty(0)|\to 0.
\end{equation}
Moreover, thanks to the $C^1$-compactness of $u_j$, there exists a universal constant $L>0$ such that $|\nabla u_j| \leq L/2$ in $B_{3/4}$. Now, let us define
$$
b_j(x,\xi):=a_j(x,\xi)\chi_{\{|\xi|\leq L\}}+K\chi_{\{|\xi|> L\}}.
$$
Clearly the sequence $\{b_j(0,\cdot)\}$ is bounded and equicontinuous, therefore by the well known theorem of Ascoli-Arzel\'a, $b_j(0,\cdot) \to b_\infty(0,\cdot)$ uniformly in $B_{1/2}$. Hence, by \eqref{E2} we obtain
\begin{equation}\label{ufpb}
|a_j(x,\xi)-b_{\infty}(0,\xi)| \leq L^{p-1}\delta_j+|a_j(0,\xi)- b_\infty(0,\xi)|
\end{equation}
for any $x \in B_{_{1/2}}$ and $\xi \in B_{L}$. This means that $a_j \to b_{\infty}$ uniformly in $B_{1/2} \times B_{L}$. From \eqref{contradiction} and \eqref{ufpb}, we have by standard arguments that $u_\infty$ solves the constant coefficients equation
\begin{equation}\nonumber
-\mbox{div} (b_\infty(0,\nabla u_\infty))=0 \quad \mbox{in} \quad B_{3/4} \quad \mbox{with} \quad u_\infty(0)=0.
\end{equation}
Notice that in $B_{3/4}$, $u_\infty$ works as a function $h$ described in \eqref{E3}. Therefore,
$$
0<\epsilon_\star \leq \sup\limits_{B_{1/2}}|u_j-u_\infty|+|\nabla u_j(0)-\nabla u_\infty(0)|.
$$
This leads a contradiction to \eqref{contradiction} for $j \gg 1$.
\end{proof}

\medskip

\begin{lem}\label{stepone}
Let $u$ be a weak solution of \eqref{prop10} in $B_1$ with $\|u\|_{L^\infty(B_1)}\leq 1$ and $u(0)=0$. There exist small positive constants $\delta_0$ and $\rho_0$ depending only on 
$n$,$p$,$q$,$\Lambda$,$\lambda$,$\sigma$ and $\alpha$ such that if 
\begin{equation}\label{pb}
\|f\|_{L^q(B_1)} \leq \delta_0, \quad \sup\limits_{B_1} |a(x,\xi)-a(0,\xi)| \leq \delta_0 |\xi|^{p-1}
\end{equation}
and
$$
|\nabla u(0)| \leq \frac{1}{4}\,\rho_0^{\,\alpha},
$$
there holds
$$
\sup\limits_{B_{\rho_0}}|u(x)| \leq  \rho_0^{1+\alpha}.
$$
\end{lem}

\begin{proof}
For $\epsilon>0$ to be determined later, we can find a solution $h$ of some constant coefficient equation satisfying \eqref{jampa} for $\delta=\delta(\epsilon)$, such that
\begin{equation}\label{2-e1}
|\nabla u(0)-\nabla h(0)| \leq \epsilon \quad \mbox{and} \quad  \sup\limits_{B_{\rho}}|u| \leq  \epsilon +\sup\limits_{B_{\rho}}|h|
\end{equation}
for any $0<\rho\le 1/2$. On the other hand, by the local regularity estimates to constant coefficients equations together the fact $h(0)=0$, we get
\begin{equation}\label{2-e2}
\sup\limits_{B_{\rho}}|h| \leq C \rho^{1+\alpha_m} + |\nabla h(0)|\rho.
\end{equation}
Here, we emphasize that $C$ depends only on $n$ and $p$.
As a consequence of (\ref{2-e1}) and (\ref{2-e2}) we obtain
\begin{equation}\label{E5}
\sup\limits_{B_{\rho}}|u| \leq \epsilon + C \rho^{1+\alpha_m} + (\epsilon+|\nabla u(0)|)\,\rho.
\end{equation}
Setting
$$
\rho=\rho_0 := \left(\frac{1}{4C}\right)^{\frac{1}{\alpha_m-\alpha}} \quad \mbox{and} \quad  \epsilon =\frac{1}{4} \rho_0^{1+\alpha},
$$
we get by \eqref{E5},
\begin{equation}\nonumber
\sup\limits_{B_{\rho_0}}|u| \leq \rho_0^{1+\alpha}
\end{equation}
after a universal choice of $\delta=\delta(\rho_0)$.
\end{proof}

Next, we apply Lemma \ref{stepone} iteratively to prove a special case of Proposition \ref{small}:

\begin{lem} \label{Interaction} Let $u$ be a weak solution of \eqref{prop10} in $B_1$ with $\|u\|_{L^\infty(B_1)} \leq 1$ and $u(0)=0$. Under conditions \eqref{a-cond} and \eqref{pb} and the same assumption for $\rho_0$ as in Lemma \ref{stepone}, there exists $C>0$ depending only on 
$n$,$p$,$q$,$\Lambda$,$\lambda$,$\sigma$ and $\alpha$ such that if
$$
|\nabla u(0)| \leq \frac{1}{4}\rho^{\alpha},
$$
for some $0<\rho \leq \rho_0$, then
$$
\sup\limits_{B_{\rho}}|u(x)| \leq  C\rho^{1+\alpha}.
$$
\end{lem}

\begin{proof}
First, we show a discrete version of Lemma \ref{Interaction}. More precisely, we claim that for $\rho_0$ given as in Lemma \ref{stepone}, if
$$
|\nabla u(0)| \leq \frac{1}{4}\rho_0^{k\alpha}
$$
for some positive integer $k$, then
$$
\sup\limits_{B_{\rho_0^k}}|u(x)| \leq  \rho_0^{k(1+\alpha)}.
$$
We prove it inductively. The case $k=1$ follows by Lemma \ref{stepone}. Next, we assume the conclusion holds for $k\le i$. Suppose $u$ satisfies
$$
|\nabla u(0)| \leq \frac{1}{4}\rho_0^{(i+1)\alpha}.
$$
Set $u_i(x):=\dfrac{u(\rho_0^ix)}{\rho_0^{i(\alpha+1)}}$. Direct computation shows that $u_i$ solves
$$
-div\; a_i(x,\nabla u_i(x))=f_i(x) \quad \mbox{ in }\quad B_1
$$
for
$$
a_i(x,\xi):= \rho_0^{i\,\alpha(1-p)}a(\rho_0^i\, x, \rho_0^{i\,\alpha}\, \xi)
$$
which satisfies the same conditions as in \eqref{a-cond}, and also the smallness conditions as in \eqref{pb}.  $f_i(x)$ is defined by
$$
f_{i}(x):=\rho_0^{i(\alpha(1-p)+1)}f(\rho_0^i x).
$$
It is easy to verify that
$$
\|f_{i}\|_{L^q(B_1)}\le  \rho_0^{i(\alpha(1-p)+1-\frac nq)} \|f\|_{L^q(\Omega)}.
$$
Note that, since $\alpha\le (1-n/q)/(p-1)$ for $p\geq 2$, we have the exponent $\alpha(1-p)+1- n/q \geq 0$ for any $p \geq 2-1/n$ and so we obtain $\|f_i\|_{L^q(B_1)}\le \delta_0$.  In addition we get
$$
|\nabla u_i(0)| \leq \frac{1}{4}\rho_0^{\,\alpha}.
$$
Thus, $u_i$ satisfies the hypotheses of Lemma \ref{stepone} and we have
$$
\sup\limits_{B_{\rho_0}}|u_i(x)| \leq  \rho_0^{1+\alpha}
$$
Finally, by the definition of $u_i$ and the estimate above, we obtain
$$
\sup\limits_{B_{\rho_0^{i+1}}}|u| \leq \rho_0^{(i+1)(1+\alpha)}.
$$

To conclude the proof of Lemma \ref{Interaction} we proceed as follows. Given a number $0<\rho \leq \rho_0$, we select an integer $k>0$ such that
$$
\rho_0^{k+1}<\rho \leq \rho_0^{k}.
$$
Hence, by the condition \eqref{E7} we have in particular
$$
|\nabla u(0)| \leq \frac{1}{4}\rho_0^{\,k\alpha}
$$
therefore,
\begin{equation}\label{E8}
\sup\limits_{B_{\rho}}|u(x)| \leq \sup\limits_{B_{\rho_0^k}}|u(x)| \leq  \rho_0^{k(1+\alpha)} = \rho_0^{-(1+\alpha)} \rho^{1+\alpha}
\end{equation}
and Lemma \ref{Interaction} is established.
\end{proof}

\subsection*{Proof of Proposition \ref{small}}
The idea here is to scale a fixed weak solution $u$ of \eqref{prop10} and apply Lemma \ref{Interaction}.

In fact, for a fixed compact set $K \Subset \Omega$ and $x_0 \in K$, we denote $d_0=dist(K,\partial\Omega)$. Also, for positive parameters $A_0$ and $B_0$, we define the following function:
$$
v(x):= \frac{u(x_0+ A_0 \, x)-u(x_0)}{B_0}.
$$
Note that $v$ solves
$$
-\mbox{div}\, a_0(x,\nabla v) = f_0 \quad \mbox{in} \quad B_1,
$$
for
$$
a_0(x, \xi):= \left( B_0/A_0 \right)^{1-p} a(x_0+A_0\, x,B_0/A_0 \cdot \xi)
$$
and
$$
\|f_0\|_{L^q(B_1)} \leq B_0^{1-p} A_0^{p-n/q} \|f\|_{L^q(B_1)}.
$$
Also, it is easy to see that $a_0$ satisfies the structural condition \eqref{a-cond} with
\begin{equation}\nonumber
|a_0(x, \xi)-a_0(0, \xi)| \leq \omega(A_0\,|x|).
\end{equation}
Therefore, by choosing
$$
A_0 := \min\left\{1,d_0/2 , \omega^{-1}\left(
\delta_0  \right)/ d_0 \right\}
$$
and
$$
B_0 := \max \left\{1,2\|v\|_{L^\infty(\Omega)},\sqrt[p-1]{\|f\|_{L^q(\Omega)} \delta_0^{-1} }\right\},
$$
we see that $a_0$ satisfies the same structural conditions \eqref{a-cond} as well as the smallness assumptions \eqref{pb} for $\delta_0$. Moreover $v$ satisfies $v(0)=0$ and $\|v\|_{L^\infty(B_1)} \leq 1$.

\medskip

Finally, let us conclude the proof of Proposition \ref{small}. For every radius $0< \rho \leq d_0\, \rho_0$ and $c>0$ to be chosen later, we have that
$$
|\nabla u(x_0)| \leq c\rho^\alpha
\quad
\mbox{implies}
\quad
|\nabla v(0)| \leq c\,\frac{B_0d_0^{\,\alpha}}{A_0}\tilde \rho^{\,\alpha}
$$
for $\tilde\rho = \rho / d_0 \leq \rho_0$. By selecting $c:=A_0/(4B_0d_0^\alpha)$ we are able to apply Lemma \ref{Interaction} where
$$
\sup_{B_{\tilde\rho}}|v(x)| \leq C \tilde\rho^{1+\alpha}
$$
and so,
$$
\sup_{B_{\frac{A_0}{d_0}\rho}(x_0)}|u(x)-u(x_0)| \leq \frac{C}{A_0^\alpha} \left(\frac{A_0}{d_0}\rho\right)^{1+\alpha}.
$$
Therefore, we conclude that for each $0<r \leq \rho_0d_0/2$, if
$$
|\nabla u(x_0)| \leq \overline{c}\, r^\alpha
$$
where $\overline{c}=A_0^{1-\alpha}/(4B_0)$, there holds
$$
\sup_{B_{r}(x_0)}|u(x)-u(x_0)| \leq \overline{C} r^{1+\alpha}
$$
for some universal $\overline{C}>0$. The proof of Proposition \ref{small} is complete.

\medskip

Also, we would like to point out that by regularity theory for quasilinear equations  the local upper bound for $L^\infty$-norm follows: $\|u\|_{L^{\infty}(K)} \leq C\|u\|_{L^{p}(\Omega)}$, for any compact set $K \Subset\Omega$ and a universal constant $C>0$ depending on universal parameters, specially on  $dist(K,\partial\Omega)^{-1}$. Therefore, the normalization constant $B_0$ depends only on the $L^p$-norm of $u$ and universal parameters.

\section{Optimal regularity estimates for small radii}\label{sec3}

Here we shall derive regularity estimates for radii satisfying
$$
\kappa \rho^\alpha < |\nabla u(x_0)|
$$
for $\kappa>0$ universal as in Proposition \ref{small}. In this case a crucial observation is that the equation \eqref{prop10} behaves as a quasilinear equation with linear growth, i.e., it satisfies conditions \eqref{a-cond} and \eqref{rhs} for $p=2$. In this special case we shall use the following result,  Theorem \ref{thm_2}, to assert that solutions of \eqref{prop10} are $C^{1+\beta}$-regular for a given exponent $\beta=\beta(\sigma,q) \geq \alpha_{\sigma, p, q}$ for all $p>1$.

\begin{thm}\label{thm_2} Let $u \in H^{1}(\Omega)$ be a solution to \eqref{prop10} satisfying the conditions of Theorem \ref{thm1} with p=2. Then $u$ is locally $C^{1,\beta}$, where the estimate \eqref{mainest} is valid for the following exponent:
$$
\beta=\left\{
\begin{array}{cl}
\min\left\{\sigma, 1-n/q\right\} & \mbox{ if} \quad n<q<\infty  \\
\sigma  & \mbox{ if} \quad q=\infty.
\end{array}
\right.
$$
\end{thm}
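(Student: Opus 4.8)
The plan is to specialize the machinery behind Theorem \ref{thm1} to the case $p=2$, where the intrinsic scaling degenerates and the estimate improves: since $\min\{1,1/(p-1)\}=1$, the index $\alpha_{p,q}$ reduces to $\min\{\sigma_0,1-n/q\}$ (or $\sigma_0$ when $q=\infty$), and moreover for $p=2$ the homogeneous constant-coefficient operator $-\mathrm{div}\,a(\nabla u)=0$ is uniformly elliptic with bounded measurable-free (indeed $C^1$) ellipticity, so its solutions are smooth and $\alpha_M=1$. Consequently $\gamma=\min\{\alpha_M^-,\alpha_{p,q}\}$ would only give $\beta^-$; the point of Theorem \ref{thm_2} is to recover the endpoint exponent $\beta$ itself. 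I would therefore not merely quote Theorem \ref{thm1} but rerun its perturbation scheme with $p=2$ fixed, tracking that the loss ``$\alpha_M^-$'' is an artifact of the degenerate case and is absent here.

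First I would set up the standard iteration: fix $x_0\in K$, and for $r>0$ small let $v$ solve the frozen-coefficient homogeneous problem $-\mathrm{div}\,a(x_0,\nabla v)=0$ in $B_r(x_0)$ with $v=u$ on $\partial B_r(x_0)$. Since $p=2$, the equation for $v$ is uniformly elliptic with $C^1$ coefficients in $\xi$, so by classical De Giorgi--Nash--Moser together with Schauder-type estimates for the linearized operator, $\nabla v$ is locally Lipschitz (in fact $C^{0,1}$ on $B_{r/2}$), with the homogeneous decay
\begin{equation}\label{pf-decay}
\Xint-_{B_\rho(x_0)}|\nabla v-(\nabla v)_{B_\rho(x_0)}|^2\,dx \le C\Big(\frac{\rho}{r}\Big)^{2}\Xint-_{B_r(x_0)}|\nabla v-(\nabla v)_{B_r(x_0)}|^2\,dx
\end{equation}
for $\rho\le r/2$. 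Next I would estimate the defect $w=u-v$ by testing the two equations against $w\in H^1_0(B_r(x_0))$; using the second line of \eqref{a-cond} (monotonicity, which for $p=2$ is plain uniform ellipticity), the third line (the $\omega$-oscillation of $a$ in $x$, contributing a factor $\omega(r)\lesssim r^{\sigma_0}$), and Sobolev/Hölder on the right-hand side $f\in L^q$ (contributing $r^{1-n/q}$ after the usual $W^{1,2}$ Sobolev embedding and Young's inequality), I obtain
$$
\Xint-_{B_r(x_0)}|\nabla w|^2\,dx \;\le\; C\,r^{2\beta}\Big(1+\Xint-_{B_r(x_0)}|\nabla u|^2\,dx\Big),
$$
where $\beta$ is exactly $\min\{\sigma_0,1-n/q\}$ (resp.\ $\sigma_0$). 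Combining this with \eqref{pf-decay} via the triangle inequality gives the excess-decay relation
$$
\Xint-_{B_\rho(x_0)}|\nabla u-(\nabla u)_{B_\rho(x_0)}|^2 \;\le\; C\Big(\frac{\rho}{r}\Big)^{2}\Xint-_{B_r(x_0)}|\nabla u-(\nabla u)_{B_r(x_0)}|^2 \;+\; C\,r^{2\beta},
$$
and the standard iteration lemma (e.g.\ Giaquinta-type) then yields $\Xint-_{B_\rho(x_0)}|\nabla u-(\nabla u)_{B_\rho(x_0)}|^2\lesssim \rho^{2\beta}$ uniformly in $x_0\in K$, whence Campanato's characterization gives $\nabla u\in C^{0,\beta}(K)$ with the bound \eqref{mainest}; I would also need the a priori local $L^\infty$ bound on $\nabla u$ (available from Theorem \ref{thm1}, or from the same scheme) to start the iteration and to absorb $\Xint-|\nabla u|^2$ into a constant.

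The main obstacle is the endpoint sharpness: the geometric factor in \eqref{pf-decay} has exponent $2$ (Lipschitz decay for $v$), and the iteration lemma transfers this to $u$ with any exponent $2\beta'<\min\{2,2\beta\}$ for free, but to reach $2\beta$ \emph{exactly} when $\beta<1$ one must verify that the comparison term $r^{2\beta}$ genuinely controls — i.e.\ that the $\omega\in C^{0,\sigma_0}$ and $f\in L^q$ contributions each decay at rate $r^{\beta}$ in the $W^{1,2}$-average, with no logarithmic loss — and that the iteration lemma can be applied at the endpoint exponent (it can, since $2\beta<2$ strictly when $q<\infty$ or $\sigma_0<1$). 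The delicate book-keeping is in the $f$-term: one must avoid the $\varepsilon$-loss that would come from a crude Sobolev embedding by using the sharp Morrey/Adams estimate $\|w\|_{L^{2^*}}\lesssim \|\nabla w\|_{L^2}$ together with the $L^q$-norm of $f$ on the shrinking ball, scaled correctly, so that the exponent that emerges is precisely $1-n/q$ and not $1-n/q-\varepsilon$. Everything else — the frozen-coefficient Schauder estimate, the monotonicity testing, Campanato's theorem — is classical once $p=2$ removes the degeneracy.
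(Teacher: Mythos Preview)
Your proposal is correct and follows essentially the same approach as the paper: freeze coefficients at $x_0$, compare $u$ to the $a(x_0,\cdot)$-harmonic replacement $h$ with $u-h\in H^1_0$, estimate $\nabla(u-h)$ by testing the difference of the equations against $u-h$ and using the ellipticity in \eqref{a-cond} together with H\"older on the $f$-term, combine this with the Campanato-type decay of the homogeneous solution to obtain an excess-decay inequality, iterate via the standard lemma (the paper's Lemma~\ref{2lem_2}, your ``Giaquinta-type'' lemma), and conclude by Campanato's embedding.

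Two minor remarks. First, the paper only claims the homogeneous decay for $\nabla h$ with exponent $n+2\alpha$ for arbitrary $\alpha<1$ (obtained by noting that each $\partial_i h$ solves a linear divergence-form equation with \emph{continuous} coefficients $A_{ij}=\partial_{\xi_j}a^i(x_0,\nabla h)$ and invoking the corresponding Morrey/Campanato estimate); this already suffices for the iteration since $\beta<1$. Your stronger Lipschitz claim $(\rho/r)^{n+2}$ would require the Schauder bootstrap you allude to but do not spell out, and is not needed. Second, in the paper's logical order Theorem~\ref{thm_2} is established \emph{before} Theorem~\ref{thm1} and is used as an ingredient in its proof, so you should source the a~priori local $L^\infty$ bound on $\nabla u$ from the existing $C^1$ regularity literature (e.g.\ \cite{tolks,DM,KM}) rather than from Theorem~\ref{thm1}, to avoid circularity.
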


Theorem \ref{thm_2} and Proposition \ref{small} are the key ingredients for proving the $C^{1,\alpha}$-regularity estimates stated in Theorem \ref{thm1}.
Even for the case $p=2$, the nonlinear vector field $a$ satisfying
\eqref{a-cond} and \eqref{rhs} has a linear growth and the classical regularity estimates for elliptic equations of divergent form cannot be applied directly. In order not to interrupt the proof the main theorem, we defer the proof of Theorem \ref{thm_2} to the appendix in Section \ref{ellipsec}. We recall that constants are called universal if they only depend on $n$,$p$,$\Lambda$,$\lambda$,$\alpha$, $\|u\|_{W^{1,p}}$,$\|\omega\|_{C^{0,\sigma}}$,$\|f\|_{L^q}$ and $dist(K,\partial \Omega)$.

\begin{prop}\label{large}
Let $u \in W^{1,p}(\Omega)$ be a weak solution of \eqref{prop10} in $\Omega$ satisfying \eqref{a-cond} and \eqref{rhs} and a compact $K \Subset \Omega$. For $\kappa$ as in Proposition \ref{small}, there exist universal positive constants $\tilde\rho$ and $C$, such that if
\begin{equation}\label{smallradius}
\kappa\rho^{\alpha} < |\nabla u(x_0)|
\end{equation}
for $x_0 \in K$ and $0<\rho\leq \tilde\rho$ , then
\begin{equation}\label{largeestim}
\sup\limits_{B_{\rho}(x_0)}|u(x)-u(x_0)-\nabla u(x_0)\cdot (x-x_0)| \leq  C\rho^{1+\alpha}.
\end{equation}
\end{prop}

\begin{proof}
Initially, for $\rho_\star:= [\kappa^{-1} \cdot |\nabla u(x_0)|]^{\frac{1}{\alpha}}$ let us define the rescaled function
$$
v(x):=\frac{u(\rho_\star x+x_0)-u(x_0)}{\rho_\star^{1+\alpha}} \quad \mbox{in} \quad B_1.
$$
Direct computation gives
\begin{equation}\label{finaleq}
-div\; a_\star(x, \nabla v)=f_\star(x) \quad \mbox{in} \quad B_1,
\end{equation}
where
$$
a_\star(x,\xi)=\rho_\star^{\alpha(1-p)}a(\rho_\star x,\rho_\star^{\alpha} \xi) \quad \mbox{and} \quad f_\star(x):=\rho_\star^{\alpha(1-p)+1}f(\rho_\star x).
$$
Clearly $a_\star$ satisfies the conditions in \eqref{a-cond} and \eqref{rhs}, where in particular
$$
\|f_\star\|_{L^q(B_1)}\leq \rho_\star^{\alpha(1-p)+1-n/q}\|f\|_{L^q(\Omega)} \leq \|f\|_{L^q(\Omega)}.
$$

In the case $\rho_\star \leq \overline{\rho}$, for $\overline\rho$ as in Proposition \ref{small}, we can apply estimate \eqref{smallestim} precisely for the radius $\rho=\rho_\star$. So, we find a universal $C>0$ such that
$$
\|v\|_{L^p(B_1)} \leq \sup\limits_{x \in B_1}|v(x)| \cdot |B_1|^{1/p} = \sup\limits_{x \in B_{\rho_\star}} \frac{|u(x)-u(x_0)|}{\rho_\star^{\alpha}}\cdot |B_1|^{1/p} \leq C.
$$
Consequently, by applying $C^0$-estimates to $\nabla v$, there exists $\tau_\star>0$ such that
$$
\mbox{osc}_{B_{\tau_\star}}|\nabla v|<\frac{\kappa}{2}.
$$
Since $|\nabla v(0)|=\kappa$, we have $|\nabla v(x)|>\frac{\kappa}{2}$ in $B_{\tau_\star}$. From this, we can find an universal constant $c_0>0$, such that
\begin{equation}\label{controlgrad}
c_0 \leq |\nabla v(x)| \leq c_0^{-1} \quad \mbox{for} \quad x \in B_{\tau_\star}.
\end{equation}
Therefore, in view of \eqref{controlgrad}, the equation \eqref{finaleq} is a nonlinear partial differential equation with linear growth, i.e., with $a_\star$ satisfying the conditions
\begin{equation}\nonumber
\left\{
\begin{array}{c}
|a_\star(x,\xi)|+|\partial_{\xi} a_\star(x,\xi)||\xi| \leq c_0^{2-p}\Lambda|\xi| \\[0.1cm]
\lambda c_0^{\,p-2}|\xi_2|^{2} \leq\; \big \langle\partial_{\xi} a_\star(x,\xi_1)\xi_2, \xi_2 \big \rangle\\[0.05cm]
|a_\star(x_1,\xi)-a_\star(x_2,\xi)| \leq c_0^{2-p}\omega(|x_1-x_2|)|\xi|,
\end{array}
\right.
\end{equation}
for each $x,x_i \in B_1$ and $\xi,\xi_i \in \mathbb R^n$, $i=1,2$. In other words, $a_\star$ satisfies \eqref{a-cond} for $p=2$ within $B_{\tau_\star}$. Hence, Theorem \ref{thm_2} provides the following estimate
\begin{equation}\label{eqfinal1}
\sup\limits_{B_r} |v(x)-v(0) - \nabla v(0) \cdot x| \leq C_1 r^{\beta},
\end{equation}
for each $0< r \leq \tau_\star/2$, where
$$
\beta=\left\{\begin{array}{clc}
\min\left\{ \sigma,1-n/q \right\} & \mbox{if} & q<\infty, \\
\sigma & \mbox{if} & q=\infty.
\end{array}
\right.
$$
As commented in the beginning of this section, one is easy to see that $\beta>\alpha_{\sigma,p,q}$ for any $0<\sigma < 1$, $p>1$ and $n<q \leq \infty$.
Therefore, by \eqref{eqfinal1} we have
\begin{equation}\label{eqfinal2}
\sup\limits_{B_{\rho_\star r}(x_0)} |u(x)-u(x_0) - \nabla u(x_0)\cdot (x-x_0)| \leq C_1 \tau_\star^{1+\alpha} r^{1+\beta} \leq C_1 (\rho_\star r)^{1+\alpha}
\end{equation}
for every $0< r \leq \tau_\star/2$. Hence, the estimate \eqref{largeestim} holds whenever
$$
0< r \leq \tau_\star\rho_\star/2.
$$
To conclude this case, we have to show that the estimate \eqref{eqfinal2} also holds for
$$
\tau_\star\rho_\star/2 < r < \rho_\star.
$$
Since the estimate \eqref{smallestim} holds precisely for the radius $\rho_\star$, we have
$$
\sup\limits_{B_r(x_0)} |u(x)-u(x_0) - \nabla u(x_0) \cdot (x-x_0)| \leq C \rho_\star^{1+\alpha} \leq C \left(\frac{2}{\tau_\star}\right)^{1+\alpha} r^{1+\alpha},
$$
which concludes the case $\rho_\star \leq \overline{\rho}$.

Finally, for the case $\rho_\star > \overline\rho$, we have the following universal bound
$$
|\nabla u(x_0)|> \kappa \overline\rho^{\,\alpha}.
$$
Therefore,  the argument applied previously for the function $v$ can also be applied to $u$. Thus, there exists a small universal parameter $\rho\,'$ such that estimate \eqref{eqfinal2} holds for every $0<\rho\leq \rho'$.

We conclude the proof of Proposition \ref{large} by choosing $\tilde\rho := \min\{\overline\rho,\rho'\}$.
\end{proof}

\begin{figure}[h!]
\begin{center}
\scalebox{1} 
{
\begin{pspicture}(0,-3.0644138)(12.449376,3.0144138)
\definecolor{color10}{rgb}{0.6039215686274509,0.5882352941176471,0.5882352941176471}
\definecolor{color11}{rgb}{0.5058823529411764,0.47843137254901963,0.47843137254901963}
\definecolor{color15}{rgb}{0.4627450980392157,0.4549019607843137,0.4549019607843137}
\definecolor{color153}{rgb}{0.49019607843137253,0.47843137254901963,0.47843137254901963}
\definecolor{color1}{rgb}{0.611764705882353,0.596078431372549,0.596078431372549}
\definecolor{color5}{rgb}{0.2784313725490196,0.25882352941176473,0.25882352941176473}
\definecolor{color8}{rgb}{0.25882352941176473,0.24313725490196078,0.24313725490196078}
\definecolor{color175}{rgb}{0.3254901960784314,0.3254901960784314,0.3254901960784314}
\definecolor{color7}{rgb}{0.24313725490196078,0.22745098039215686,0.22745098039215686}
\definecolor{color14c}{rgb}{0.7294117647058823,0.7215686274509804,0.7215686274509804}
\definecolor{color15g}{rgb}{0.8431372549019608,0.8470588235294118,0.8588235294117647}
\definecolor{color15f}{rgb}{0.8470588235294118,0.8588235294117647,0.8588235294117647}
\psbezier[linewidth=0.004,fillstyle=gradient,gradlines=2000,gradbegin=color15g,gradend=color15f,gradmidpoint=1.0](1.0,-0.7254513)(1.0,-0.7055862)(1.6725619,-1.27248)(2.822193,-1.4853005)(3.9718237,-1.6981212)(5.525446,-1.08538)(6.406506,-1.145368)(7.287566,-1.2053561)(8.119678,-2.0926712)(8.769349,-1.8852212)(9.419022,-1.6777713)(9.530265,-0.6454672)(10.050892,-0.3455267)(10.571518,-0.0455862)(11.14,-0.5454121)(11.14,-0.5255862)(11.14,-0.50576025)(11.126757,-2.2455862)(11.132193,-2.2451499)(11.1376295,-2.2447135)(1.0,-2.2455862)(1.0,-2.2455862)(1.0,-2.2455862)(1.0,-0.74531645)(1.0,-0.7254513)
\psbezier[linewidth=0.1,linecolor=color1]{cc-cc}(1.0101562,-0.7255862)(4.110156,-2.7655861)(5.494854,-0.1967586)(7.690156,-1.6055862)(9.885458,-3.0144138)(9.030156,0.7144138)(11.150155,-0.5255862)
\pscircle[linewidth=0.025999999,linecolor=color10,linestyle=dashed,dash=0.16cm 0.16cm,dimen=outer](6.380156,0.3644138){2.49}
\psline[linewidth=0.01cm,linecolor=color11](6.372031,0.4544138)(5.677656,1.5944138)
\usefont{T1}{ptm}{m}{n}
\rput(1.5145313,-1.6955862){\color{color7}$\mathcal{C}(u)$}
\psline[linewidth=0.03,linecolor=color153,tbarsize=0.07055555cm 5.0]{cc-|*}(5.4576564,0.2744138)(2.517656,1.5144138)(2.517656,1.8544137)
\pscircle[linewidth=0.025999999,linecolor=color10,linestyle=dashed,dash=0.16cm 0.16cm,dimen=outer,fillstyle=crosshatch,hatchwidth=0.01,hatchangle=45.0,hatchcolor=color14c](6.3439064,0.3881638){1.39375}
\psline[linewidth=0.01cm,linecolor=color11](8.717656,-0.4655862)(6.397656,0.4144138)
\usefont{T1}{ptm}{m}{n}
\rput(2.5081253,2.0644138){\small \color{color175}regular zone}
\usefont{T1}{ptm}{m}{n}
\rput(6.4445314,1.0244138){\color{color175}$\rho_\star$}
\usefont{T1}{ptm}{m}{n}
\rput(6.1745315,0.2644138){\color{color5}$x_0$}
\psdots[dotsize=0.09](6.3901553,0.4544138)
\psdots[dotsize=0.14,linecolor=color8](6.372031,0.4544138)
\usefont{T1}{ptm}{m}{n}
\rput(7.8845315,-0.4755862){\color{color8}$\rho$}
\usefont{T1}{ptm}{m}{n}
\rput(9.9175005,2.559414){\footnotesize \color{color175}$\rho_\star \sim |\nabla u(x_0)|^{\frac1\alpha}$}
\psframe[linewidth=0.03,dimen=outer](11.150155,3.0144138)(0.9901561,-2.2655861)
\end{pspicture}
}
\end{center}

\caption{This picture indicates how estimate \eqref{estfinal} is obtained by considering the cases: the regular case $\rho < \rho_\star$ and the degenerate one $\rho_\star \leq \rho$.}
\end{figure}
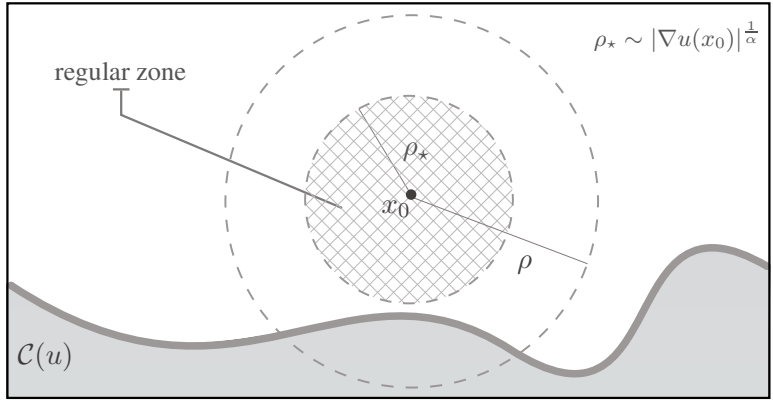

\section{Proof of main Theorem}\label{mainthmsec}

Finally we combine Proposition \ref{small} and Proposition \ref{large}, which are for the large and small radii cases respectively, in the proof of Theorem \ref{thm1}, see figure 1. As a direct consequence, we obtain for each compact set $K \Subset \Omega$, positive constants $C$ and $\tilde \rho$,
depending on on $n,p,\Lambda,\lambda, \alpha,\|u\|_{W^{1,p}}, \|\omega\|_{C^{0,\sigma}}, \|f\|_{L^q}$ and $dist(K,\partial \Omega)$, such that for each $x_0\in K$, there holds
\begin{equation}\label{estfinal}
\sup\limits_{B_{\rho}(x_0)}|u(x)-u(x_0)-\nabla u(x_0)\cdot (x-x_0)| \leq  C\rho^{\alpha+1}, \quad 0<\rho\leq \tilde\rho
\end{equation}
where $\alpha$ is determined by (\ref{opt-gamma}). Now, we show how Theorem \ref{thm1} follows from estimate \eqref{estfinal} by a standard argument.

\begin{proof}[Proof of Theorem \ref{thm1}]
Without loss of generality we assume
$x=-re_1,y=r e_1$ with $x,y\in B_{1/4}$, where $e_1=(1,0...,0)$ and $e_i$ is understood similarly. From estimate \eqref{estfinal} we clearly have
$$
u(y)=u(x)+2\partial_1 u(x)r+O(r^{1+\alpha})
$$
and
$$
u(x)=u(y)-2\partial_1 u(y)r+O(r^{1+\alpha}).
$$
The two equations above lead to
$$
|\partial_1 u(x)-\partial_1 u(y)|\cdot r^{\alpha}\le C.
$$
For $i=2,...,n$ we fix $z=\frac{x+y}2+re_i$.
Also by estimate \eqref{estfinal} we have
\begin{equation}\label{f-e1}
u(z)=u(x)+\nabla u(x)\cdot (z-x)+O(r^{1+\alpha})
\end{equation}
and
\begin{equation}\label{f-e2}
u(z)=u(y)+\nabla u(y)\cdot (z-y)+O(r^{1+\alpha}).
\end{equation}
By the definition of $z$ we have
$$
\nabla u(y)\cdot (z-y)=\partial_1 u(y)(-r)+\partial_i u(y)r, \quad \nabla u(x)\cdot (z-x)=\partial_1 u(x)r+\partial_i u(x)r.
$$
Using these equations, from (\ref{f-e1}) and (\ref{f-e2}), we have
$$
u(y)-u(x)-\partial_1 u(y)r+(\partial_i u(y)-\partial_i u(x))r-2\partial_1 u(x)r=O(r^{1+\alpha}).
$$
Using
$$
u(y)-u(x)=2\partial_1 u(x)r+O(r^{1+\alpha})
$$
we have
$$
\partial_i u(y)-\partial_i u(x)=O(r^{\alpha}),\quad i=2,..,n.
$$
Therefore Theorem \ref{thm1} is established by using the estimate above on $K \Subset \Omega$ and a standard covering argument.
\end{proof}

\section{Appendix: Optimal regularity estimates for quasilinear equations with linear growth}\label{ellipsec}

In this section we establish Theorem \ref{thm_2}, which provides optimal regularity estimates for equations \eqref{prop10} with linear growth, i.e., the vector field $a:\Omega \times \mathbb{R}^n \to \mathbb{R}$ satisfying the conditions \eqref{a-cond} and \eqref{rhs} for $p=2$. We remember Theorem \ref{thm_2} performs as a major tool to approach the optimal regularity estimates for small radii, Section \ref{sec3}.

\begin{lem} \label{tool-lem} For $R>0$ and $x_0\in \Omega$, let $h \in H^1(B_R(x_0))$ be a solution of
\begin{equation}\label{homoeq}
-div \, a(x_0,\nabla h) = 0 \quad \mbox{in} \quad B_R(x_0),
\end{equation}
with $a$ satisfying the conditions in \eqref{a-cond} and \eqref{rhs} for $p=2$. Then
\begin{equation}\nonumber
\displaystyle\int_{B_r(x_0)}|\nabla h - (\nabla h)_{x_0,r}|^2 dx \leq C(\lambda,\Lambda)\left(\frac{r}{R} \right)^{n+2} \int_{B_R(x_0)} |\nabla h - (\nabla h)_{x_0,R}|^2 dx,
\end{equation}
for any $0<r\leq R$ and some $C(\lambda,\Lambda)>0$.
\end{lem}
Here we are using the classical average notation
$$
(f)_{x,r}:= \frac{1}{|B_r(x)|}\int_{B_r(x)}f \, dx.
$$

\noindent{\bf Proof:} The proof of Lemma \ref{tool-lem} follows from standard estimates for harmonic functions. First we observe that under the assumptions for $a$ (with $p=2$), $h \in W^{2,2}_{loc}(B_R(x_0))$. Differentiating (\ref{homoeq}) with respect to $x_l$ $(l=1,...,n)$, we have
\begin{equation}
-div (A(x)\nabla h_l)=0, \quad \mbox{in }\quad B_R(x_0)
\end{equation}
where $A_{ij}(x)=\partial_{\xi_j}a^i(x_0,\nabla h(x))$ is uniformly elliptic (because $p=2$):
$$
\lambda |\xi|^2 \leq A_{ij}(x)\xi_i\xi_j \leq \Lambda |\xi|^2, \quad \forall \xi=(x_1,...,x_n)\in \mathbb R^n,
$$
and $C^{0}(\overline{B_R(x_0)})$ because $h$ is $C^1$ as a solution to the constant coefficient equation (\ref{homoeq}).
Lemma \ref{tool-lem} follows immediately from Lemma 1.41 of \cite{HL}, since the coefficient matrix $A$ for $h_l$ satisfies all the requirements of Lemma 1.41 in
\cite{HL}.

\medskip

As an immediate consequence of Lemma\ref{tool-lem}, we have

\begin{lem}\label{lem_2} Let $h \in H^1(B_R(x_0))$ be a solution to \eqref{homoeq} in $B_R(x_0)$, and $u$ be a solution of (\ref{prop10}) in the same domain. There exists $C>0$ depending only on $\lambda,\Lambda$ such that
\begin{equation}\nonumber
\begin{array}{rcl}
\displaystyle\int_{B_r(x_0)}|\nabla u - (\nabla u)_{x_0,r}|^2 dx & \leq & \displaystyle C\left(\frac{r}{R} \right)^{n+2} \int_{B_R(x_0)} |\nabla u - (\nabla u)_{x_0,R}|^2 dx \\
 & + & C\,\displaystyle\int_{B_R(x_0)} |\nabla u- \nabla h|^2 dx\\
\end{array}
\end{equation}
for any $u \in H^1(B_R(x_0))$ and $0<r\leq R$.
\end{lem}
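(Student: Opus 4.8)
The plan is to compare the solution $u$ of the inhomogeneous problem with the $a(x_0,\cdot)$-harmonic replacement $h$ on the ball $B_R(x_0)$, using the triangle inequality on $L^2$-norms of $\nabla u - (\nabla u)_{x_0,r}$ and the decay estimate \eqref{0est_2} for $h$. First I would split, for $0<r\le R$,
\begin{equation}\nonumber
\nabla u - (\nabla u)_{x_0,r} = \bigl(\nabla u - \nabla h\bigr) + \bigl(\nabla h - (\nabla h)_{x_0,r}\bigr) + \bigl((\nabla h)_{x_0,r}-(\nabla u)_{x_0,r}\bigr),
\end{equation}
and observe that $(\nabla h)_{x_0,r}-(\nabla u)_{x_0,r} = \bigl((\nabla h - \nabla u)\bigr)_{x_0,r}$, whose $L^2(B_r(x_0))$-norm is controlled by $\|\nabla h - \nabla u\|_{L^2(B_r(x_0))}$ via Jensen's inequality. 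Hence, using $(a+b+c)^2\le 3(a^2+b^2+c^2)$,
\begin{equation}\nonumber
\int_{B_r(x_0)}|\nabla u - (\nabla u)_{x_0,r}|^2\,dx \le C\int_{B_r(x_0)}|\nabla u - \nabla h|^2\,dx + C\int_{B_r(x_0)}|\nabla h - (\nabla h)_{x_0,r}|^2\,dx.
\end{equation}

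Next I would apply \eqref{0est_2} to the middle term, which gives a bound by $C(r/R)^{n+2\alpha}\int_{B_R(x_0)}|\nabla h - (\nabla h)_{x_0,R}|^2\,dx$. To return from $h$ to $u$ on the large ball, I would again use the triangle inequality together with the fact that the average $(\nabla u)_{x_0,R}$ minimizes $c\mapsto \int_{B_R(x_0)}|\nabla h - c|^2$ up to the error coming from replacing $\nabla h$ by $\nabla u$; concretely,
\begin{equation}\nonumber
\int_{B_R(x_0)}|\nabla h - (\nabla h)_{x_0,R}|^2\,dx \le 2\int_{B_R(x_0)}|\nabla u - (\nabla u)_{x_0,R}|^2\,dx + C\int_{B_R(x_0)}|\nabla u - \nabla h|^2\,dx.
\end{equation}
Since $(r/R)^{n+2\alpha}\le 1$ and $B_r(x_0)\subset B_R(x_0)$, all the $\|\nabla u - \nabla h\|_{L^2}$ terms may be absorbed into a single term $C\int_{B_R(x_0)}|\nabla u - \nabla h|^2\,dx$, yielding the claimed inequality.

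This lemma is essentially a soft, purely functional-analytic manipulation — the classical Campanato-type comparison argument — so there is no serious obstacle. The only point requiring a little care is the bookkeeping of constants and making sure the factor $(r/R)^{n+2\alpha}$ is attached only to the genuinely ``good'' term $\int_{B_R(x_0)}|\nabla u - (\nabla u)_{x_0,R}|^2\,dx$ and not to the comparison error; this is automatic because on $B_r(x_0)$ with $r\le R$ one simply estimates the comparison term by its integral over the larger ball $B_R(x_0)$ without any gain in $r$. No properties of $a$, of $h$ beyond \eqref{0est_2}, or of the equation for $u$ are needed here — the genuine content (the existence of a good harmonic replacement with small $\|\nabla u-\nabla h\|_{L^2}$) will be supplied later when this lemma is applied.
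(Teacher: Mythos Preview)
Your proof is correct and follows essentially the same Campanato-type comparison argument as the paper: split off $\nabla u-\nabla h$ on $B_r$, apply the decay estimate \eqref{0est_2} to the $h$-term, and then pass back from $h$ to $u$ on $B_R$ by another triangle inequality, absorbing all comparison errors into $C\int_{B_R(x_0)}|\nabla u-\nabla h|^2\,dx$. The paper's version is slightly terser (it inserts $(\nabla h)_{x_0,r}$ directly rather than writing out the three-term split and invoking Jensen), but the content and structure are the same.
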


\begin{proof}
Let us consider $v:=u-h$. A direct computation gives
\begin{equation}\nonumber
\begin{array}{ccc}
&&\displaystyle\int_{B_r(x_0)}|\nabla u-(\nabla u)_{x_0,r}|^2 dx\\
 & \leq & C \left(\displaystyle\int_{B_r(x_0)}|\nabla u - (\nabla h)_{x_0,r}|^2 dx+\displaystyle\int_{B_r(x_0)}|\nabla v|^2 dx \right) \\
 & \leq & C \left(\displaystyle\int_{B_r(x_0)}|\nabla h - (\nabla h)_{x_0,r}|^2 dx+\displaystyle\int_{B_r(x_0)}|\nabla v|^2 dx \right)
\end{array}
\end{equation}
for any $0<r\leq R$. In virtue of Lemma \ref{tool-lem} we have
\begin{align*}
&\int_{B_r(x_0)}|\nabla u-(\nabla u)_{x_0,r}|^2 dx \\
\le & \,C \left(\frac{r}{R}\right)^{n+2}\int_{B_R(x_0)}|\nabla h - (\nabla h)_{x_0,R}|^2 dx
+ C \int_{B_R(x_0)}|\nabla v|^2 dx.
\end{align*}
Using triangle inequalities we further have
\begin{align*}
&\int_{B_r(x_0)}|\nabla u-(\nabla u)_{x_0,r}|^2 dx \\
\le &\, C \left(\frac{r}{R}\right)^{n+2}\int_{B_R(x_0)}|\nabla u - (\nabla u)_{x_0,R}|^2 dx+ C \int_{B_R(x_0)}|\nabla v|^2 dx\\[0.15cm]
&+CR^n |\nabla u_{x_0,R}-\nabla h_{x_0,R}|^2\\[0.15cm]
\le &\, C\left(\frac{r}{R}\right)^{n+2}\int_{B_R(x_0)}|\nabla u - (\nabla u)_{x_0,R}|^2 dx+ C \int_{B_R(x_0)}|\nabla v|^2 dx
\end{align*}
where the last inequality can be derived by the definition of $\nabla u_{x_0,R},\nabla h_{x_0,R}$.
\end{proof}

In order to prove Theorem \ref{thm_2}, we need the following technical lemma in \cite[Lemma 3.4]{HL}.

\begin{lem}\label{2lem_2}
Let $\Phi \geq 0$ be a nondecreasing function on $[0,R]$ satisfying
$$
\Phi(\rho) \leq A \left(\left( \frac{\rho}{r} \right)^\tau+\epsilon \right) \Phi(r) + Br^\varsigma
$$
for any $0<\rho\leq r \leq R$ with $A,B,\tau,\varsigma$ nonnegative constants and $\tau>\varsigma$. Then for any $\theta \in (\varsigma,\tau)$ there exists
a constant $\epsilon_0=\epsilon_0(A,\tau,\varsigma,\theta)$ such that if $\epsilon<\epsilon_0$ we have, for all $0<\rho\le r\le R$
$$
\Phi(\rho) \leq c \bigg ( (\frac{\rho}{r})^\theta \Phi(r) + Br^\varsigma \bigg )
$$
where $c$ is a positive constant depending on $A,\tau,\varsigma,\theta$. In particular we have for any $0<r\le R$
$$\Phi(r)\le c \bigg (\frac{\Phi(R)}{R^{\theta}}r^{\theta}+Br^{\varsigma}\bigg ). $$
\end{lem}

\begin{proof}[Proof of Theorem \ref{thm_2}]

First for fixed $R$ and $x_0$ we write the equation for $u$ in $B_R(x_0)$ in the weak form:

$$\int_{B_R(x_0)}a(x_0,\nabla u)\nabla \phi=\int_{B_R(x_0)}f\phi+\int_{B_R(x_0)}(a(x,\nabla u)-a(x_0,\nabla u))\nabla\phi, $$
for all $\phi\in H_0^1(B_R(x_0))$.
For the last term on the right hand side we use the H\"older assumption of $a$ in (\ref{a-cond}):
\begin{equation}\label{wek-u2}
\left|\int_{B_R(x_0)}(a(x,\nabla u)-a(x_0,\nabla u))\nabla v \right|\le C\int_{B_R(x_0)}\omega(|x-x_0|)|\nabla u| |\nabla v|.
\end{equation}
Let $h$ be the unique solution to \eqref{homoeq} that makes $u-h \in H^1_0(B_R(x_0))$.
Denote $v:=u-h$ as a test function to write the equation for $u$ as
\begin{equation}\label{wek-u}
\int_{B_r(x_0)}(a(x_0,\nabla u)\nabla v-fv)=E
\end{equation}
where $E=\displaystyle\int_{B_r(x_0)}(a(x,\nabla u)-a(x_0,\nabla u))\nabla v$ satisfies (by (\ref{wek-u2}))
\begin{equation}\label{e-e}
 |E|\le C\int_{B_r(x_0)}\omega(|x-x_0|)|\nabla u| |\nabla v|.
 \end{equation}
The equation for $h$ is
\begin{equation}\label{eq-h}
\int_{B_r(x_0)} a(x_0,\nabla h)\nabla v=0.
\end{equation}
In order to estimate the difference between (\ref{wek-u}) and (\ref{eq-h}) we first observe that
$$a(x_0,\nabla u)-a(x_0,\nabla h)=\int_0^1\partial_{\xi}a(x_0,t\nabla u+(1-t)\nabla h)dt\cdot (\nabla u-\nabla h). $$
Then the ellipticity assumption on $a$ with $p=2$ gives
\begin{equation}\label{ellip-v}
 (a(x_0,\nabla u)-a(x_0,\nabla h))\cdot \nabla v\ge \lambda |\nabla v|^2.
 \end{equation}
Thus the combination of (\ref{wek-u}), (\ref{e-e}), (\ref{eq-h}) and (\ref{ellip-v}) gives
\begin{equation}\label{1est_2}
\displaystyle\int_{B_r(x_0)}|\nabla v|^2 dx \leq C \left(\omega(r)^2 \int_{B_r(x_0)}|\nabla u|^2 dx+ \left(\int_{B_r(x_0)}|f|^{\frac{2n}{n+2}}dx\right)^{\frac{n+2}{n}}\right)
\end{equation}
for some universal constant $C>0$. Standard H\"older inequality yields
\begin{equation}\label{2est_2}
\left(\int_{B_r(x_0)}|f|^{\frac{2n}{n+2}}dx\right)^{\frac{n+2}{n}} \leq  C(n)\left(\int_{B_r(x_0)}|f|^{q}dx\right)^{\frac{2}{q}}\cdot r^{\,n+2\left(1-n/q\right)}
\end{equation}
for all $q>n$. Then, by \eqref{1est_2} and \eqref{2est_2} we get
\begin{equation}\label{3est_2}
\displaystyle\int_{B_r(x_0)}|\nabla v|^2 dx \leq C \left(\omega(r)^2 \int_{B_r(x_0)}|\nabla u|^2 dx+ \|f\|^2_{L^q(B_1)}\cdot r^{\,n+2\left(1-n/q\right)}\right).
\end{equation}
It follows from Lemma \ref{lem_2} that
\begin{equation}\nonumber
\begin{array}{rcl}
\displaystyle\int_{B_r(x_0)}|\nabla u - (\nabla u)_{x_0,r}|^2 dx & \leq & C\left(\dfrac{r}{R} \right)^{n+2} \displaystyle\int_{B_R(x_0)} |\nabla u - (\nabla u)_{x_0,R}|^2 dx\\[0.5cm]
& + & C\omega(r)^2\displaystyle\int_{B_r(x_0)}|\nabla u-(\nabla u)_{x_0,R}|^2\\[0.7cm]
& + & C(\nabla u_{x_0,R})^2r^{n+2\sigma}\\[0.5cm]
& + & C\|f\|^2_{L^q(B_1)}\cdot r^{n+2(1-n/q)}
\end{array}
\end{equation}
where $\omega(r)=O(r^{\sigma})$ is used.
Thus
\begin{align}
\int_{B_r(x_0)}|\nabla u - (\nabla u)_{x_0,r}|^2 dx & \leq   C\omega(r)^2 \int_{B_R(x_0)} |\nabla u - (\nabla u)_{x_0,R}|^2 dx \nonumber\\[0.15cm]
 & + C\left(\frac{r}{R}\right)^{n+2}\int_{B_R(x_0)} |\nabla u - (\nabla u)_{x_0,R}|^2 dx \nonumber \\[0.15cm]
 & + Cr^{\,n+2\beta}  \label{add-1}
\end{align}
for any $0<r\leq R$ and $\beta=\min\{\sigma,1-n/q\}$ . By Lemma \ref{2lem_2} there is $R_0>0$ such that
\begin{equation}\nonumber
\begin{array}{rcl}
\displaystyle\int_{B_r(x_0)}|\nabla u - (\nabla u)_{x_0,r}|^2 dx & \leq & \displaystyle C \left(\frac{r}{R} \right)^{\,n+2\beta} \int_{B_R(x_0)} |\nabla u - (\nabla u)_{x_0,R}|^2 dx \\[0.5cm]
 & + & C \,r^{\,n+2\beta}\\
\end{array}
\end{equation}
for any $0<r\leq R\le R_0$. In particular, for $R=R_0$ and $0<r \leq R_0$ we have
\begin{equation}\nonumber
\displaystyle\int_{B_r(x_0)}|\nabla u - (\nabla u)_{x_0,r}|^2 dx \leq C \cdot r^{\,n+2\beta}.
\end{equation}
A standard application of Campanato's embedding Theorem (see for instance \cite{MZ}) proves the desired H\"older continuity. The proof of Theorem \ref{thm_2} is complete.
\end{proof}

\bigskip

{\small \noindent{\bf Acknowledgments.} The authors would like to thank the hospitality of the University of Florida, where this work was conducted. DJA is supported by CNPq-Brazil. LZ is partially supported by a Simons Foundation Collaboration Grant.}


\end{document}